\documentclass[12pt]{amsart}

\usepackage{amsthm, amsmath, amsfonts, amssymb, graphicx, tikz-cd, enumerate}

\newtheorem{theorem}{Theorem}[section]
\newtheorem{lemma}[theorem]{Lemma}
\newtheorem{proposition}[theorem]{Proposition}

\theoremstyle{definition}
\newtheorem{definition}[theorem]{Definition}

\newtheorem{remark}[theorem]{Remark}

\setlength{\topmargin}{-0.5in}
\setlength{\textheight}{9in}
\setlength{\oddsidemargin}{0in}
\setlength{\evensidemargin}{0in}
\setlength{\textwidth}{6.5in}

\newcommand{\func}[1]{\operatorname{#1}}

%%%%% CATEGORIES %%%%%
\def\lalg{\boldsymbol{\ell\hspace{-.2mm}\mathit{alg}}}
\def\bal{\boldsymbol{\mathit{ba}\ell}}
\def\ubal{\boldsymbol{\mathit{uba}\ell}}

\def\balg{\boldsymbol{\mathit{balg}}}
\def\KHaus{\sf{KHaus}}
\newcommand\caba{\sf{CABA}}
\def\Set{{\sf{Set}}}
\def\WSet{{\sf{WSet}}}
\def\Stone{{\sf{Stone}}}
\def\VL{{\sf{VL}}}
\def\pVL{{\sf{pVL}}}
\def\uVL{{\sf{uVL}}}

%%%%% FUNCTORS ETC %%%%%
\def\Id{\func{Id}}

\title{Free bounded archimedean $\ell$-algebras}
\author{G.~Bezhanishvili, L.~Carai, P.~J.~Morandi}
\date{}

\address{Department of Mathematical Sciences, New Mexico State University, Las Cruces NM 88003 USA}
\email{guram@nmsu.edu, lcarai@nmsu.edu, pmorandi@nmsu.edu}

\subjclass[2010]{06F25; 13J25; 06F20; 46A40; 08B20; 54C30}
\keywords{Bounded archimedean $\ell$-algebra, Gelfand duality, freeness}

\begin{document}

\begin{abstract}
We show that free objects on sets do not exist in the category $\bal$ of bounded archimedean $\ell$-algebras. On the other hand, we introduce the category of weighted sets and prove that free objects on weighted sets do exist in $\bal$. We conclude by discussing several consequences of this result.
\end{abstract}

\maketitle

\section{Introduction}

The category $\bal$ of bounded archimedean $\ell$-algebras plays an important role in the study of Gelfand duality as algebraic counterparts of compact Hausdorff spaces live in $\bal$. Indeed, for each compact Hausdorff space $X$, the $\ell$-algebra $C(X)$ of continuous real-valued functions on $X$ is an object of $\bal$, and these algebras can be characterized as uniformly complete objects of $\bal$ (see Section~2 for details). This yields a contravariant functor $C$ from the category $\KHaus$ of compact Hausdorff spaces to $\bal$. The functor $C$ has a contravariant adjoint $Y:\bal\to\KHaus$ sending each $A\in\bal$ to the Yosida space $Y_A$ of maximal $\ell$-ideals of $A$ (more details are given in Section~2). This yields a contravariant adjunction between $\bal$ and $\KHaus$ that restricts to a dual equivalence between $\KHaus$ and the reflective subcategory $\ubal$ of $\bal$ consisting of uniformly complete objects of $\bal$. The reflector $\bal\to\ubal$ is the uniform completion functor. We thus arrive at the following commutative diagram.

\[
\begin{tikzcd}%[column sep = 5pc]
\ubal \arrow[rr, hookrightarrow] && \bal \arrow[dl, "Y"]  \arrow[ll, bend right = 20] \\
&  \KHaus \arrow[ul,  "C"] &
\end{tikzcd}
\]

Gelfand duality can be thought of as a generalization to $\KHaus$ of Stone duality between the categories $\sf BA$ of boolean algebras and $\Stone$ of Stone spaces. By Tarski duality, the category $\caba$ of complete and atomic boolean algebras and complete boolean homomorphisms is dually equivalent to the category $\Set$ of sets and functions. A version of Tarski duality was established in \cite{BMO20b} between $\Set$ and a (non-full) subcategory $\balg$ of $\bal$ whose objects are Dedekind complete objects of $\bal$ whose boolean algebra of idempotents is atomic (see Section~4 for details). As we will see in Section~4, $\balg$ is a reflective subcategory of $\bal$, and the reflector is the canonical extension functor developed in \cite{BMO18c}.

In this article we study free objects in $\bal$ as well as in $\ubal$ and $\balg$. We first show that the forgetful functor $\bal\to\Set$ does not have a left adjoint, and hence free objects do not exist in $\bal$ in the usual sense. We next introduce the category $\WSet$ of weighted sets and prove that the forgetful functor $\bal\to\WSet$ does indeed have a left adjoint $F : \WSet \to \bal$, thus showing that free objects do exist in $\bal$ in this modified sense. As a consequence, we obtain that $F$ composed with the uniform completion functor is left adjoint to the forgetful functor $\ubal\to\WSet$, and that $F$ composed with the canonical extension functor is left adjoint to the forgetful functor $\balg\to\WSet$. Thus, free objects also exist in $\ubal$ and $\balg$ in this modified sense.

\section{Preliminaries}

We start by recalling some basic facts about lattice-ordered rings and algebras. We use Birkhoff's book \cite[Ch.~XIII and onwards]{Bir79} as our main reference. All rings we consider are assumed to be commutative and unital.

\begin{definition}
A ring $A$ with a partial order $\le$ is a \emph{lattice-ordered ring}, or an \emph{$\ell$-ring} for short, provided
\begin{itemize}
\item $(A,\le)$ is a lattice;
\item $a\le b$ implies $a+c \le b+c$ for each $c$;
\item $0 \leq a, b$ implies $0 \le ab$.
\end{itemize}
An $\ell$-ring $A$ is an \emph{$\ell$-algebra} if it is an $\mathbb R$-algebra and for each $0 \le a\in A$ and $0\le r\in\mathbb R$ we have $0 \le r\cdot a$.
\end{definition}

It is well known and easy to see that the conditions defining $\ell$-algebras are equational, and hence $\ell$-algebras form a variety. We denote this variety and the corresponding category of $\ell$-algebras and unital $\ell$-algebra homomorphisms by $\lalg$.

\begin{definition} \label{def: bal}
Let $A$ be an $\ell$-ring.
\begin{itemize}
\item $A$ is \emph{bounded} if for each $a \in A$ there is $n \in \mathbb{N}$ such that $a \le n\cdot 1$ (that is, $1$ is a \emph{strong order unit}).
\item $A$ is \emph{archimedean} if for each $a,b \in A$, whenever $n\cdot a \le b$ for each $n \in \mathbb{N}$, then $a \le 0$.
\end{itemize}
\end{definition}

Let $\bal$ be the full subcategory of $\lalg$ consisting of bounded archimedean $\ell$-algebras. It is easy to see that $\bal$ is not a variety (it is closed under neither products nor homomorphic images).

\begin{definition}
Let $A\in\lalg$. For $a\in A$, define the \emph{absolute value} of $a$ by
\[
|a|=a\vee(-a).
\]
If in addition $A \in \bal$, define the \emph{norm} of $a$ by
\[
||a||=\inf\{r\in\mathbb R \mid |a|\le r\cdot 1\}.
\]
Then $A$ is \emph{uniformly complete} if the norm is complete.
\end{definition}

\begin{remark}
Since $A \in \bal$ is bounded, $\|\!\cdot \!\|$ is well defined, and $\|\!\cdot\!\|$ is a norm since $A$ is archimedean.
\end{remark}

Let $\ubal$ be the full subcategory of $\bal$ consisting of uniformly complete $\ell$-algebras.

\begin{theorem} [{Gelfand duality}] \label{thm: Gelfand}
There is a dual adjunction between $\bal$ and $\KHaus$ which restricts to a dual equivalence between $\KHaus$ and $\ubal$.
\end{theorem}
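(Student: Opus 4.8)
The plan is to realize the dual adjunction through the two functors indicated in the introduction, namely the function-algebra functor $C \colon \KHaus \to \bal$ (sending $X$ to the $\ell$-algebra of continuous real-valued functions, with a continuous map acting by precomposition) and the Yosida functor $Y \colon \bal \to \KHaus$, and then to pin down exactly where the unit and counit fail to be isomorphisms. First I would construct $Y$: for $A \in \bal$ let $Y_A$ be the set of maximal $\ell$-ideals of $A$ under the hull-kernel topology. Boundedness of $A$ (the strong order unit $1$) is what forces $Y_A$ to be compact, while a separation argument with $\ell$-ideals yields Hausdorffness; since an $\ell$-algebra homomorphism contracts maximal $\ell$-ideals to maximal $\ell$-ideals, each $h \colon A \to B$ induces a continuous $Y_h \colon Y_B \to Y_A$, so $Y$ is a well-defined contravariant functor. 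The crucial algebraic input is that for every $M \in Y_A$ the quotient $A/M$ is an archimedean totally ordered field which, being an $\mathbb{R}$-algebra, must be isomorphic to $\mathbb{R}$ by a H\"older-type argument. This identification is the first point at which both hypotheses, bounded and archimedean, are genuinely needed, and I expect it to carry most of the technical weight.

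Next I would build the unit. The isomorphisms $A/M \cong \mathbb{R}$ assemble into the Yosida map $\eta_A \colon A \to C(Y_A)$, $a \mapsto \hat a$, where $\hat a(M) = a + M$. I would verify that each $\hat a$ is continuous, that $\eta_A$ is an $\ell$-algebra homomorphism, and, using the archimedean property to show that the intersection of all maximal $\ell$-ideals is $\{0\}$, that $\eta_A$ is injective. Moreover $\eta_A$ is norm-preserving: one checks that $|a| \le r \cdot 1$ holds in $A$ exactly when $|\hat a(M)| \le r$ for every $M$, so $\eta_A$ is an isometric embedding into $C(Y_A)$ with its sup norm. Since $\eta_A(A)$ contains the constants and separates the points of $Y_A$, the Stone--Weierstrass theorem shows that $\eta_A(A)$ is uniformly dense in $C(Y_A)$. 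Dually, for $X \in \KHaus$ I would define $\epsilon_X \colon X \to Y_{C(X)}$ by $x \mapsto M_x = \{f \in C(X) \mid f(x) = 0\}$, and use Urysohn's lemma together with compactness to show that every maximal $\ell$-ideal of $C(X)$ has this form and that $\epsilon_X$ is a homeomorphism.

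With $\eta$ and $\epsilon$ in hand, I would check naturality of both families and the two triangle identities, which establishes the dual adjunction asserted in the first part of the statement. For the restricted equivalence, observe that $\epsilon_X$ is always an isomorphism and that $C(X)$ is always uniformly complete, so all the content lies in analyzing $\eta_A$. Because $\eta_A$ is an isometric embedding with uniformly dense image, it is surjective (hence an isomorphism) precisely when its image is already uniformly complete, that is, precisely when $A \in \ubal$. Restricting the adjunction to uniformly complete algebras therefore yields the dual equivalence between $\ubal$ and $\KHaus$.

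The main obstacle throughout is the interaction between the lattice order and the norm. The steps that truly require the bounded archimedean hypotheses are the identification $A/M \cong \mathbb{R}$ and the verification that $\eta_A$ is an isometry, since these are what let the order-theoretic data be read off as honest real values and sup norms. By contrast, the topological assertions—compactness and Hausdorffness of $Y_A$, and the fact that $\epsilon_X$ is a homeomorphism—are comparatively routine consequences of compactness and Urysohn's lemma, and the final bookkeeping of naturality and the triangle identities is formal.
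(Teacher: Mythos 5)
Your proposal is correct and follows essentially the same route the paper sketches after the theorem statement: the functors $C$ and $Y$, the identification $A/M \cong \mathbb{R}$, the homeomorphism $\varepsilon_X$, and the Stone--Weierstrass argument showing that $\zeta_A : A \to C(Y_A)$ is the uniform completion, hence an isomorphism exactly when $A$ is uniformly complete. The paper itself does not prove the theorem but cites \cite[Sec.~3]{BMO13a}, and your outline matches that standard development.
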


\begin{remark}
Gelfand duality is also known as Gelfand-Naimark-Stone duality (see, e.g., \cite{BMO13a}). This duality was established by Gelfand and Naimark \cite{GN43} between $\KHaus$ and the category of commutative $C^*$-algebras. Gelfand and Naimark worked with complex-valued functions and associated with each $X\in\KHaus$ the $C^*$-algebra of all continuous complex-valued functions on $X$. On the other hand, Stone \cite{Sto40} worked with real-valued functions and associated with each $X\in\KHaus$ the $\ell$-algebra of all continuous real-valued functions on $X$. In this respect, Theorem~\ref{thm: Gelfand} is more closely related to Stone's work. Nevertheless, we follow Johnstone \cite[Sec.~IV.4]{Joh82} in calling this result Gelfand duality. The Gelfand-Naimark and Stone approaches are equivalent in that the complexification functor establishes an equivalence between $\ubal$ and the category of commutative $C^*$-algebras (see \cite[Sec.~7]{BMO13a} for details). \end{remark}

We briefly describe the functors $C:\KHaus\to\bal$ and $Y:\bal\to\KHaus$ establishing the dual adjunction of Theorem~\ref{thm: Gelfand}; for details see \cite[Sec.~3]{BMO13a} and the references therein. For a compact Hausdorff space $X$ let $C(X)$ be the ring of (necessarily bounded) continuous real-valued functions on $X$. For a continuous map $\varphi:X\to Y$ let $C(\varphi):C(Y)\to C(X)$ be defined by $C(\varphi)(f)=f\circ\varphi$ for each $f\in C(Y)$. Then $C:\KHaus\to\bal$ is a well-defined contravariant functor.

For $A \in \lalg$, we recall that an ideal $I$ of $A$ is an \emph{$\ell$-ideal} if $|a|\le|b|$ and $b\in I$ imply $a\in I$, and that $\ell$-ideals are exactly the kernels of $\ell$-algebra homomorphisms. If $A\in\bal$, then we can associate to $A$ a compact Hausdorff space as follows. Let $Y_A$ be the space of maximal $\ell$-ideals of $A$, whose closed sets are exactly sets of the form
\[
Z_\ell(I) = \{M\in Y_A\mid I\subseteq M\},
\]
where $I$ is an $\ell$-ideal of $A$. As follows from the work of Yosida \cite{Yos41}, $Y_A \in \KHaus$. The space $Y_A$ is often referred to as the \emph{Yosida space} of $A$. We set $Y(A)=Y_A$, and for a morphism $\alpha$ in $\bal$ we let $Y(\alpha)=\alpha^{-1}$. Then $Y:\bal\to\KHaus$ is a well-defined contravariant functor, and the functors $C$ and $Y$ yield a contravariant adjunction
between $\bal$ and $\KHaus$.

Moreover, for $X\in\KHaus$ we have that $\varepsilon_X:X\to Y_{C(X)}$ is a homeomorphism where
\[
\varepsilon_X(x)=\{f\in C(X) \mid f(x)=0\}.
\]
Furthermore, for $A\in\bal$ define $\zeta_A :A\to C(Y_A)$ by $\zeta_A(a)(M)=r$ where $r$ is the unique real number
satisfying $a+M=r+M$. Then $\zeta_A$ is a monomorphism in $\bal$ separating points of $Y_A$. Therefore, by the Stone-Weierstrass theorem, $\zeta_A : A \to C(Y_A)$ is the uniform completion of $A$. Thus, if $A$ is
uniformly complete, then $\zeta_A$ is an isomorphism. Consequently, the contravariant adjunction restricts to a dual equivalence between $\ubal$ and $\KHaus$, yielding Gelfand duality. Another consequence of these considerations is the following well-known result.

\begin{proposition}\label{prop: SW}
$\ubal$ is a full reflective subcategory of $\bal$, and the reflector assigns to each $A \in \bal$ its uniform completion $C(Y_A) \in \ubal$.
\end{proposition}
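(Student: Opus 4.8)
The plan is to verify directly that $\ubal$ satisfies the universal property of a reflective subcategory. Fullness is immediate, since $\ubal$ was defined as a \emph{full} subcategory of $\bal$; the content of the statement is therefore reflectivity, i.e.\ that the inclusion $\ubal\hookrightarrow\bal$ admits a left adjoint. I claim that for each $A\in\bal$ the pair $(C(Y_A),\zeta_A)$ is the reflection of $A$ into $\ubal$: note first that $C(Y_A)\in\ubal$, since it is of the form $C(X)$ with $X=Y_A\in\KHaus$. It then remains to show that $\zeta_A\colon A\to C(Y_A)$ is universal among morphisms from $A$ into uniformly complete algebras; that is, for every $B\in\ubal$ and every morphism $f\colon A\to B$ in $\bal$ there is a unique morphism $\tilde f\colon C(Y_A)\to B$ in $\ubal$ with $\tilde f\circ\zeta_A=f$.

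For existence I would exploit the naturality of $\zeta$, which is part of the contravariant adjunction of Theorem~\ref{thm: Gelfand}. Since $C\circ Y$ is a covariant functor on $\bal$ and $\zeta\colon\Id\to C\circ Y$ is a natural transformation, the morphism $f\colon A\to B$ yields the commuting square $\zeta_B\circ f = C(Y(f))\circ\zeta_A$. Because $B\in\ubal$, the map $\zeta_B\colon B\to C(Y_B)$ is an isomorphism, so I may set $\tilde f:=\zeta_B^{-1}\circ C(Y(f))\colon C(Y_A)\to B$. This is a morphism in $\ubal$ as a composite of such, and naturality gives $\tilde f\circ\zeta_A=\zeta_B^{-1}\circ C(Y(f))\circ\zeta_A=\zeta_B^{-1}\circ\zeta_B\circ f=f$, as required.

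For uniqueness I would use that $\zeta_A(A)$ is dense in $C(Y_A)$ — this is exactly the statement, already established via the Stone--Weierstrass theorem, that $\zeta_A$ is the uniform completion of $A$. Any morphism in $\bal$ is contractive, since an $\ell$-algebra homomorphism $h$ preserves $|\cdot|$, the order, the unit, and scalar multiplication, whence $|a|\le r\cdot 1$ forces $|h(a)|\le r\cdot 1$ and so $\|h(a)\|\le\|a\|$; in particular every such $h$ is uniformly continuous. Thus if $\tilde f,g\colon C(Y_A)\to B$ both satisfy $\tilde f\circ\zeta_A=g\circ\zeta_A=f$, they agree on the dense subset $\zeta_A(A)$ and, being continuous into the Hausdorff normed space $B$, must coincide everywhere.

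Once the reflection of each object is in hand, the reflector $\bal\to\ubal$ is forced to be $A\mapsto C(Y_A)=(C\circ Y)(A)$ on objects, which is functorial because $C$ and $Y$ are, and the universal property then upgrades this to a left adjoint of the inclusion. I expect no serious obstacle here: every ingredient — naturality of $\zeta$, invertibility of $\zeta_B$ on $\ubal$, density of $\zeta_A(A)$, and contractivity of morphisms — has already been recorded, so the argument is essentially an assembly of these facts. The one point that rewards care is keeping the variance straight, so that $\tilde f$ is produced as a genuine composite of $\bal$-morphisms rather than as a mere normwise extension whose multiplicativity and lattice-compatibility would then require separate verification.
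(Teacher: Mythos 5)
Your proof is correct and takes essentially the approach the paper intends: the paper states this proposition as an immediate consequence of the preceding discussion (that $\zeta_A\colon A\to C(Y_A)$ is the uniform completion and that $\zeta_B$ is an isomorphism for $B\in\ubal$), and your argument simply makes the universal-property verification explicit via naturality of $\zeta$, invertibility of $\zeta_B$, and density plus contractivity for uniqueness. The contractivity of $\bal$-morphisms that you re-derive inline is exactly Lemma~\ref{lem: norm preserving} of the paper.
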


\section{Free Objects in $\bal$}

As we pointed out in Section~2, $\lalg$ is a variety, hence has free algebras by Birkhoff's theorem (see, e.g., \cite[Thm.~10.12]{BS81}). Since $\bal$ is not a subvariety of $\lalg$, it does not follow immediately that $\bal$ has free algebras. In fact, we show that free algebras on sets do not exist in $\bal$. In other words, we show that
the forgetful functor $U : \bal \to \Set$ does not have a left adjoint.

Let $A \in \lalg$. If $A \ne 0$, then sending $r \in \mathbb{R}$ to $r \cdot 1 \in A$ embeds $\mathbb{R}$ into $A$, and we identify $\mathbb{R}$ with a subalgebra of $A$. By this identification, if $A,B \ne 0$ and $\alpha : A \to B$ is a $\lalg$-morphism, then $\alpha(r) = r$ for each $r \in \mathbb{R}$.

\begin{lemma} \label{lem: norm preserving}
Let $A, B \in \bal$ and $\alpha : A \to B$ be a $\bal$-morphism. Then for each $a \in A$ we have $\alpha(|a|) = |\alpha(a)|$ and $\|\alpha(a)\| \le \|a\|$.
\end{lemma}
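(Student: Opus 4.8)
The plan is to use that a $\bal$-morphism is in particular a morphism in $\lalg$, and hence preserves the lattice operations $\vee$ and $\wedge$ and is additive; consequently it is order-preserving and commutes with negation. I would also record at the outset that $\alpha$, being a unital $\mathbb{R}$-algebra homomorphism, satisfies $\alpha(r\cdot 1)=r\cdot 1$ for every $r\in\mathbb{R}$, which is precisely the identification noted just before the lemma.

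The first identity is then a direct computation from the definition $|a|=a\vee(-a)$:
\[
\alpha(|a|)=\alpha(a\vee(-a))=\alpha(a)\vee\alpha(-a)=\alpha(a)\vee(-\alpha(a))=|\alpha(a)|.
\]

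For the norm inequality I would compare the two sets over which the infima defining $\|a\|$ and $\|\alpha(a)\|$ are taken. Fix any $r\in\mathbb{R}$ with $|a|\le r\cdot 1$ in $A$. Applying the order-preserving map $\alpha$ and using the first identity together with $\alpha(r\cdot 1)=r\cdot 1$ gives $|\alpha(a)|=\alpha(|a|)\le r\cdot 1$ in $B$. Thus
\[
\{r\in\mathbb{R}\mid |a|\le r\cdot 1\}\subseteq\{r\in\mathbb{R}\mid |\alpha(a)|\le r\cdot 1\},
\]
and since the infimum of a larger set is no greater, taking infima yields $\|\alpha(a)\|\le\|a\|$.

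The lemma is essentially routine, so I do not expect a serious obstacle; the only points requiring care are the justification that a $\lalg$-morphism is order-preserving (which follows from preservation of $\vee$ via the characterization $x\le y\iff x\vee y=y$) and the observation that the argument never needs the infima to be attained, since the conclusion follows purely from the displayed set inclusion.
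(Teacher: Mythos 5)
Your proof is correct. The first identity is handled exactly as in the paper. For the norm inequality the paper takes a slightly different (though closely related) route: it uses the fact that the infimum defining $\|a\|$ is attained, i.e.\ $|a|\le \|a\|\cdot 1$ (which holds because $A$ is archimedean), applies $\alpha$ to get $|\alpha(a)|=\alpha(|a|)\le \|a\|\cdot 1$, and concludes $\|\alpha(a)\|\le\|a\|$. Your set-inclusion argument reaches the same conclusion without ever needing the infimum to be attained, which makes it marginally more self-contained --- the paper's step $|a|\le\|a\|\cdot 1$ is true but is itself a small archimedean argument that the paper leaves implicit. One minor point you share with the paper: in the degenerate case $B=0$ the expression $\|\alpha(a)\|$ is an infimum over all of $\mathbb{R}$, which is why the paper explicitly reduces to $A,B\neq 0$; your argument is fine there too, but it is worth noting that the identification $\alpha(r\cdot 1)=r\cdot 1$ as real scalars presupposes the nonzero case.
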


\begin{proof}
Let $a \in A$. Then $\alpha(|a|) = \alpha(a \vee -a) = \alpha(a) \vee -\alpha(a) = |\alpha(a)|$. For the second statement it is sufficient to assume $A, B \ne 0$. Since $|a| \le \|a\|$, we have $\alpha(|a|) \le \alpha(\|a\|) = \|a\|$. Therefore, $|\alpha(a)| = \alpha(|a|) \le \|a\|$ and hence $\|\alpha(a)\| \le \|a\|$.
\end{proof}

\begin{theorem} \label{lem: no frees}
The forgetful functor $U : \bal \to \Set$ does not have a left adjoint.
\end{theorem}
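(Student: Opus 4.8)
The plan is to argue by contradiction, reducing everything to the free object on a one-element set. Suppose $U$ admits a left adjoint $F$. Applying the adjunction to the singleton $\{\ast\}$ produces a universal arrow $\eta : \{\ast\} \to U(F(\{\ast\}))$; write $g = \eta(\ast) \in F(\{\ast\})$ for the distinguished ``generic'' element. The universal property then says precisely that for every $A \in \bal$ and every choice of $a \in A$ there is a unique $\bal$-morphism $\varphi_a : F(\{\ast\}) \to A$ with $\varphi_a(g) = a$, since functions $\{\ast\} \to U(A)$ correspond exactly to elements of $A$.

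The heart of the argument is to feed this through the norm inequality of Lemma~\ref{lem: norm preserving}. Every $\bal$-morphism is norm nonincreasing, so applying this to $\varphi_a$ gives
\[
\|a\| = \|\varphi_a(g)\| \le \|g\|
\]
for every $a \in A$ and every $A \in \bal$. Since $F(\{\ast\})$ is a bounded archimedean $\ell$-algebra, $\|g\|$ is a fixed finite real number. Thus the existence of the free object would force a single constant to bound the norm of every element of every object of $\bal$ simultaneously.

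This is impossible: $\mathbb{R} \in \bal$, and the element $n \cdot 1 \in \mathbb{R}$ has norm $n$, so no finite number dominates all of them. Choosing $A = \mathbb{R}$ and $a = n\cdot 1$ with $n > \|g\|$ contradicts the displayed inequality, which completes the proof. The only point needing a word of care is that $F(\{\ast\})$ is a genuine nonzero object, so that $\|g\|$ is well defined; this is automatic, since the zero algebra admits no morphism to a nonzero object and so cannot satisfy the universal property. I do not expect any serious obstacle: the entire force of the theorem is already compressed into Lemma~\ref{lem: norm preserving}, which is exactly the feature of $\bal$-morphisms that keeps a putative free object from absorbing elements of arbitrarily large norm, and this mismatch with the unbounded norms available in $\bal$ is what later motivates replacing $\Set$ by the category $\WSet$ of weighted sets.
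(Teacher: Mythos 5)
Your proof is correct and follows essentially the same route as the paper's: both derive a contradiction from Lemma~\ref{lem: norm preserving} by mapping the generator of a putative free object to an element of $\mathbb{R}$ whose norm exceeds the (finite) norm of that generator. The paper phrases the argument for an arbitrary nonempty set rather than specializing to the singleton, but the substance is identical.
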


\begin{proof}
If $U$ has a left adjoint, then for each $X\in\Set$, there is $F(X)\in\bal$ and a function $f : X \to F(X)$ such that for each $A\in\bal$ and each function $g: X\to A$ there is a unique $\bal$-morphism $\alpha:F(X)\to A$ satisfying $\alpha\circ f=g$.
\[
\begin{tikzcd}[column sep = 5pc]
X \arrow[r, "f"] \arrow[rd, "g"'] & F(X) \arrow[d, "\alpha"] \\
& A
\end{tikzcd}
\]
Let $X$ be a nonempty set. Pick $x \in X$, choose $r \in \mathbb{R}$ with $r > \|f(x)\|$, and define $g : X \to \mathbb{R}$ by setting $g(y) = r$ for each $y \in X$. There is a (unique) $\bal$-morphism $\alpha : F(X) \to \mathbb{R}$ with $\alpha\circ f = g$, so
$\alpha(f(x)) = r$. But if $a \in F(X)$, then $\|\alpha(a)\| \le \|a\|$ by Lemma~\ref{lem: norm preserving}. Therefore,
\[
r = \|\alpha(f(x))\| \le \|f(x)\| < r.
\]
The obtained contradiction proves that $F(X)$ does not exist. Thus, $U$ does not have a left adjoint.
\end{proof}

The key reason for nonexistence of a left adjoint to the forgetful functor $U : \bal \to \Set$ can be explained as follows. The norm on $A$ provides a weight function on the set $A$, and each $\bal$-morphism $\alpha$ respects this weight function due to the inequality $\|\alpha(a)\| \le \|a\|$. The forgetful functor $U : \bal \to \Set$ forgets this, which is the obstruction to the existence of a left adjoint as seen in the proof of Theorem~\ref{lem: no frees}. We repair this by working with weighted sets.

\begin{definition}
\begin{itemize}
\item[]
\item A {\em weight function} on a set $X$ is a function $w$ from $X$ into the nonnegative real numbers.
\item A \emph{weighted set} is a pair $(X, w)$ where $X$ is a set and $w$ is a weight function on $X$.
\item Let $\WSet$ be the category whose objects are weighted sets and whose morphisms are functions $f : (X_1, w_1) \to (X_2, w_2)$ satisfying $w_2(f(x)) \le w_1(x)$ for each $x \in X$.
\end{itemize}
\end{definition}

\begin{lemma}
There is a forgetful functor $U:\bal\to\WSet$.
\end{lemma}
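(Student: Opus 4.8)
The plan is to define $U$ on objects by equipping the underlying set of each bounded archimedean $\ell$-algebra with its norm as the weight function, and to define $U$ on morphisms simply as the underlying functions; the entire content of the lemma then reduces to checking that these assignments land in $\WSet$ and are functorial, with the only substantive point already supplied by Lemma~\ref{lem: norm preserving}.

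First I would set, for each $A \in \bal$, $U(A) = (A, \|\cdot\|)$, where the underlying set is that of $A$ and the weight function sends $a \in A$ to its norm $\|a\|$. Since $A$ is bounded, $\|a\|$ is a well-defined nonnegative real number (as recorded in the remark following the definition of the norm, using that $|a| \ge 0$), so $\|\cdot\|$ is indeed a weight function and $(A, \|\cdot\|)$ is a genuine weighted set. Next I would define $U$ on a $\bal$-morphism $\alpha : A \to B$ by letting $U(\alpha)$ be $\alpha$ itself, viewed as a function between the underlying sets of $A$ and $B$. To confirm that $U(\alpha)$ is a $\WSet$-morphism I must verify that it does not increase weight, that is, $\|\alpha(a)\| \le \|a\|$ for each $a \in A$; but this is precisely the second assertion of Lemma~\ref{lem: norm preserving}, so no further argument is required here.

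Finally, functoriality is immediate: since $U$ leaves both objects and morphisms unchanged at the level of underlying sets and functions, the identities $U(\func{id}_A) = \func{id}_{U(A)}$ and $U(\beta \circ \alpha) = U(\beta) \circ U(\alpha)$ follow from the corresponding identities for ordinary functions. Thus there is no real obstacle to the construction; the only nontrivial ingredient, namely that $\bal$-morphisms are weight-nonincreasing, has already been dispatched by Lemma~\ref{lem: norm preserving}, and the lemma is essentially a repackaging of that norm-contraction inequality into the language of weighted sets. This also makes transparent the motivating remark preceding the definition of $\WSet$: the norm is exactly the structure that the forgetful functor to $\Set$ discards and that $\WSet$ is designed to retain.
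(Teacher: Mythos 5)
Your proposal is correct and follows exactly the paper's own argument: assign to each $A$ the weighted set $(A, \|\!\cdot\!\|)$, let morphisms act as themselves, and invoke Lemma~\ref{lem: norm preserving} for the weight-nonincreasing condition. The paper's proof is just a terser version of the same reasoning, omitting the routine functoriality check you spell out.
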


\begin{proof}
If $A \in \bal$, then
$(A, \|\! \cdot \!\|) \in \WSet$. Moreover, if $\alpha : A \to B$ is a $\bal$-morphism, then $\|\alpha(a)\| \le \|a\|$ by Lemma~\ref{lem: norm preserving}. Therefore, $\alpha$ is a $\WSet$-morphism. Thus, the assignment $A \mapsto (A, \|\!\cdot \!\|)$ defines a forgetful functor $U:\bal\to\WSet$.
\end{proof}

\begin{definition}
Let $A\in\lalg$. Call $a\in A$ {\em bounded} if there is $n \in \mathbb{N}$ with $-n\cdot 1 \le a \le n\cdot 1$. Let $A^*$ be the set of bounded elements of $A$.
\end{definition}

Let $A \in \lalg$. If $a,b \in A^*$, then there are $n,m \in \mathbb{N}$ with $-n\cdot 1 \le a \le n\cdot 1$ and $-m\cdot 1 \le b \le m\cdot 1$. Therefore, $-(n + m)\cdot 1 \le a \pm b \le (n + m)\cdot 1$. Similar facts hold for join, meet, and multiplication. Thus, we have the following:

\begin{lemma}\label{lem: bounded sub}
Let $A\in\lalg$. Then $A^*$ is a subalgebra of $A$, and hence $A^*$ is a bounded $\ell$-algebra. Therefore, if $A$ is archimedean, then $A^*\in\bal$.
\end{lemma}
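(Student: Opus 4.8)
The plan is to establish the three assertions in turn: that $A^*$ is a subalgebra of $A$, that this subalgebra is bounded, and that it inherits the archimedean property. Since the conditions defining $\lalg$ are equational, to show $A^*$ is a subalgebra it suffices to check that $A^*$ contains $0$ and $1$ and is closed under $+$, $-$, multiplication, scalar multiplication by $\mathbb{R}$, and the lattice operations $\vee$ and $\wedge$. Throughout I would use the characterization that $a \in A^*$ iff $-n\cdot 1 \le a \le n\cdot 1$ for some $n \in \mathbb{N}$, together with the fact that $\mathbb{R}$ embeds as an $\ell$-subalgebra, so that $n\cdot 1 \vee m\cdot 1 = \max(n,m)\cdot 1$.

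For the easy closures, given $a,b \in A^*$ with bounds $n$ and $m$, I would put $k = n+m$ so that $-k\cdot 1 \le a,b \le k\cdot 1$ simultaneously. The additive case is exactly the computation displayed just before the statement, and the lattice cases then follow at once from $-k\cdot 1 \le a \wedge b \le a \vee b \le k\cdot 1$. For scalar multiplication by $r \ge 0$, I would apply the $\ell$-algebra axiom ($0 \le s$ and $0 \le c$ imply $0 \le s\cdot c$) to $c = n\cdot 1 - a \ge 0$ to obtain $r\cdot a \le rn\cdot 1$, and symmetrically for the lower bound; the case $r < 0$ reduces to this by negation. That $0,1 \in A^*$ is immediate.

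The step I expect to be the main obstacle is closure under multiplication, because in a general $\ell$-algebra (not assumed to be an $f$-ring) one cannot multiply inequalities freely, and in particular $|ab| \le |a|\cdot|b|$ need not be available. My plan is to reduce to products of \emph{positive} elements via the positivity axiom. Once closure under $-$, $\vee$, and $\wedge$ is in hand, the positive and negative parts $a^{+} = a \vee 0$ and $a^{-} = (-a) \vee 0$ lie in $A^*$, are nonnegative, and satisfy $a = a^{+} - a^{-}$. For nonnegative $a,b$ with $0 \le a \le n\cdot 1$ and $0 \le b \le m\cdot 1$, applying the positivity axiom to $(n\cdot 1 - a)\,b \ge 0$ gives $ab \le nb$, and applying the scalar axiom to $n(m\cdot 1 - b) \ge 0$ gives $nb \le nm\cdot 1$, so $0 \le ab \le nm\cdot 1$ and $ab \in A^*$. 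Expanding a general product as $ab = a^{+}b^{+} - a^{+}b^{-} - a^{-}b^{+} + a^{-}b^{-}$ and invoking closure under $+$ and $-$ then shows $ab \in A^*$ in all cases.

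Finally, $A^*$ is bounded because every $a \in A^*$ satisfies $a \le n\cdot 1$ for some $n$ by the very definition of $A^*$, so $1$ remains a strong order unit; hence $A^*$ is a bounded $\ell$-algebra. If in addition $A$ is archimedean, then since the order on the subalgebra $A^*$ is the restriction of the order on $A$, any $a,b \in A^*$ with $n\cdot a \le b$ for all $n \in \mathbb{N}$ satisfy this inequality in $A$, forcing $a \le 0$; thus $A^*$ is archimedean, and therefore $A^* \in \bal$.
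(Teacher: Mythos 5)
Your proof is correct and follows the same direct-verification approach as the paper, which simply checks closure under $\pm$ and asserts that ``similar facts hold for join, meet, and multiplication.'' Your decomposition $ab = a^{+}b^{+} - a^{+}b^{-} - a^{-}b^{+} + a^{-}b^{-}$ correctly fills in the multiplication case that the paper leaves implicit, and the remaining steps (boundedness from the definition of $A^*$, and the archimedean property being inherited by subalgebras) match the paper's reasoning.
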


Let $A\in\lalg$. As we pointed out in Section~2, $\ell$-ideals are kernels of $\ell$-algebra homomorphisms. However, if $I$ is an $\ell$-ideal of $A$, then the quotient $A/I$ may not be archimedean even if $A$ is archimedean.

\begin{definition}
We call an $\ell$-ideal $I$ of $A \in \lalg$ \emph{archimedean} if $A/I$ is archimedean.
\end{definition}

\begin{remark}
Archimedean $\ell$-ideals were studied by Banaschewski (see \cite[App.~2]{Ban97}, \cite{Ban05a}) in the category of archimedean $f$-rings.
\end{remark}

It is easy to see that the intersection of archimedean $\ell$-ideals is archimedean. Therefore, we may talk about the archimedean $\ell$-ideal of $A$ generated by $S \subseteq A$.

\begin{theorem} [Main result] \label{thm: UMP holds}
The forgetful functor $U : \bal \to \WSet$ has a left adjoint.
\end{theorem}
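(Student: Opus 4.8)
The plan is to construct the free object $F(X,w)$ on a weighted set $(X,w)$ explicitly, and verify its universal mapping property. The natural candidate is built in two stages: first produce a "large" $\ell$-algebra containing generators of the right weights, then cut it down to $\bal$ by passing to bounded elements and quotienting by the archimedean $\ell$-ideal generated by the weight constraints. Concretely, let $P = \mathbb{R}[X]$ be the polynomial $\ell$-algebra on the underlying set $X$ — since $\lalg$ is a variety it has free objects, so let $P$ be the free $\lalg$-object on $X$ with insertion $\iota : X \to P$. The weight $w$ should be encoded as relations: I want each generator $\iota(x)$ to have norm at most $w(x)$ in the image, so I form the archimedean $\ell$-ideal $J$ of $P^*$ (the bounded part) generated by the elements $(|\iota(x)| - w(x)\cdot 1)^+ = (|\iota(x)| - w(x)\cdot 1)\vee 0$ for $x\in X$. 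Then I set $F(X,w) = P^*/J$ and let $f : X \to F(X,w)$ be the composite of $\iota$ with the quotient map.

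First I would check that $F(X,w) \in \bal$: by Lemma~\ref{lem: bounded sub}, $P^* \in \bal$ provided $P$ is archimedean, and since $J$ is an \emph{archimedean} $\ell$-ideal by construction, the quotient $P^*/J$ is archimedean; boundedness is automatic in $P^*$ and descends to the quotient. I must also verify $f$ is a $\WSet$-morphism, i.e. $\|f(x)\| \le w(x)$ — this is exactly what the generators of $J$ force, since killing $(|\iota(x)|-w(x))^+$ makes $|f(x)| \le w(x)\cdot 1$ in the quotient. The main work is the universal property: given $A \in \bal$ and a $\WSet$-morphism $g : (X,w)\to (A,\|\!\cdot\!\|)$, so $\|g(x)\|\le w(x)$ for all $x$, I need a unique $\bal$-morphism $\alpha : F(X,w)\to A$ with $\alpha\circ f = g$. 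Freeness of $P$ in $\lalg$ gives a unique $\lalg$-morphism $\tilde g : P \to A$ with $\tilde g\circ\iota = g$. Since $g(x)\in A$ is bounded (its norm is finite), $\tilde g$ restricts to $P^* \to A^* = A$. The key verification is that $\tilde g$ factors through $J$: because $\|g(x)\|\le w(x)$ we have $|g(x)|\le w(x)\cdot 1$ in $A$, hence $\tilde g\big((|\iota(x)|-w(x))^+\big) = (|g(x)|-w(x)\cdot 1)^+ = 0$, so the generators of $J$ lie in $\ker\tilde g$; and since $\ker\tilde g$ is an archimedean $\ell$-ideal (as $A$ is archimedean), the whole of $J$ is contained in it. This yields the desired $\alpha$, and uniqueness follows because $f(X)$ generates $F(X,w)$ as an $\ell$-algebra.

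The step I expect to be the main obstacle is controlling the archimedean $\ell$-ideal $J$ well enough to be sure the construction is nontrivial and that $J$ has the correct elements. The subtlety is that the archimedean $\ell$-ideal generated by a set is defined only as an intersection (an "external" description), so showing that $\tilde g$ annihilates all of $J$ requires the observation that $\ker\tilde g$ is \emph{itself} an archimedean $\ell$-ideal containing the generators — which is clean — but verifying that $f$ genuinely has the stated weight, and that no collapse occurs (e.g. that $F(X,w)$ is not the zero algebra when it shouldn't be), may require a more hands-on model. A cleaner alternative that sidesteps these difficulties is to realize $F(X,w)$ concretely inside a function algebra: take the product space $\prod_{x\in X}[-w(x),w(x)]$ with its compact Hausdorff topology, let $\pi_x$ be the $x$-th coordinate projection, and define $F(X,w)$ to be the $\bal$-subalgebra of $C\!\left(\prod_x [-w(x),w(x)]\right)$ generated by the $\pi_x$. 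This makes $\|f(x)\| = \|\pi_x\| \le w(x)$ transparent, and the universal property then follows from Gelfand duality together with the universal property of the product of intervals; I would likely present both descriptions and prove they agree.
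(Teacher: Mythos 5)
Your overall strategy --- take the free object of $\lalg$ on $X$, impose the weight constraints by quotienting by an archimedean $\ell$-ideal, and obtain the universal property from the observation that the kernel of the induced map is itself an archimedean $\ell$-ideal containing the generators --- is exactly the paper's, and the universal-property and uniqueness arguments you sketch are sound. However, there is a concrete error in your construction: in the free object $P$ of $\lalg$ on $X$, the generator $\iota(x)$ is \emph{not} a bounded element. (If $-n\cdot 1 \le \iota(x) \le n\cdot 1$ held in $P$, then applying the $\lalg$-morphism $P \to \mathbb{R}$ sending $\iota(x)$ to $n+1$ and the other generators to $0$ would give $n+1 \le n$.) Consequently neither $\iota(x)$ nor $(|\iota(x)| - w(x)\cdot 1)^+$ lies in $P^*$, so the ideal $J$ ``of $P^*$ generated by these elements'' is undefined, and the map $f = \pi\circ\iota$ into $P^*/J$ does not exist. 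Taking bounded elements first and quotienting second is the wrong order of operations.

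The repair is to form the archimedean $\ell$-ideal $J$ in $P$ itself, generated by the $(|\iota(x)|-w(x)\cdot 1)^+$ (or, as the paper does, by $\iota(x) - ((\iota(x)\vee -w(x))\wedge w(x))$, which encodes the same relation): in $P/J$ one has $|\iota(x)+J| \le w(x)\cdot 1$, so $P/J$ is generated by bounded elements and is therefore itself bounded; the subalgebra $P^*$ enters only as a tool for seeing this, not as the domain of the quotient. With that reordering your argument goes through essentially verbatim. Your alternative concrete model --- the $\ell$-subalgebra of $C\left(\prod_{x}[-w(x),w(x)]\right)$ generated by the projections --- is also viable and agrees with what the paper establishes in Theorem~\ref{thm: Yosida}, but its universal property still requires an argument: given a $\WSet$-morphism $g : X \to A$, one pulls back along the continuous map $Y_A \to \prod_{x}[-w(x),w(x)]$ determined by the $\zeta_A(g(x))$ and must check that the generated subalgebra lands inside $\zeta_A[A]$ (it does, since $\zeta_A[A]$ is an $\ell$-subalgebra containing the images of the generators, and $\zeta_A$ is injective because $A$ is archimedean); that step should be written out rather than attributed wholesale to Gelfand duality.
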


\begin{proof}
It is enough to show that there is a free object in $\bal$ on each $(X,w) \in\WSet$ (see, e.g., \cite[Ex.~18.2(2)]{AHS06}). Let $G(X)$ be the free object in $\lalg$ on $X$ and let $g:X\to G(X)$ be the corresponding map. We next quotient $G(X)$ by an archimedean $\ell$-ideal $I$ so that $-w(x) \le g(x) + I \le w(x)$ for each $x \in X$. Let $I$ be the archimedean $\ell$-ideal of $G(X)$ generated by
\[
\{ g(x) - ((g(x) \vee -w(x))\wedge w(x)) \mid x \in X\},
\]
and set $F(X,w) = G(X)/I$. Let $\pi : G(X) \to F(X,w)$ be the canonical projection. Clearly $F(X,w)$ is an archimedean $\ell$-algebra. We show that $F(X,w)$ is bounded, and hence that $F(X,w)\in\bal$.
Let $G(X)^*$ be the bounded subalgebra of $G(X)$ (see Lemma~\ref{lem: bounded sub}). Since $G(X)$ is generated by $\{g(x) \mid x \in X\}$, we have that $G(X)/I$ is generated by $\{\pi g(x) \mid x \in X\}$. Now,
\[
\pi g(x) = \pi((g(x) \vee -w(x))\wedge w(x))
\]
since $g(x) - ((g(x) \vee -w(x)) \wedge w(x)) \in I$. We have $-w(x) \le (g(x) \vee -w(x)) \wedge w(x) \le w(x)$, so $(g(x) \vee -w(x)) \wedge w(x) \in G(X)^*$. This shows that the generators of $F(X,w)$ lie in $\pi[G(X)^*]$, so $F(X,w) \cong G(X)^*/(I\cap G(X)^*)$ is a quotient of $G(X)^*$. Thus, $F(X,w)$ is bounded.

Let $f:X\to F(X,w)$ be given by $f(x)=\pi g(x)$. Since $f(x) = \pi((g(x) \vee -w(x))\wedge w(x))$, we have $-w(x) \le f(x) \le w(x)$, so $\|f(x)\| \le w(x)$. Therefore, $f$ is a $\WSet$-morphism.

 Let $A\in \bal$ and $h:X\to A$ be a $\WSet$-morphism, so
$\|h(x)\| \le w(x)$ for each $x \in X$.
There is an $\ell$-algebra homomorphism $\alpha : G(X) \to A$ with $\alpha\circ g = h$. Because $A$ is archimedean, $G(X)/\ker(\alpha)$ is archimedean, so $\ker(\alpha)$ is an archimedean $\ell$-ideal of $G(X)$. We show that $I \subseteq \ker(\alpha)$. It suffices to show that $g(x) - ((g(x) \vee -w(x)) \wedge w(x)) \in \ker(\alpha)$ for each $x \in X$ since $\ker(\alpha)$ is an archimedean $\ell$-ideal. Because $\|h(x) \| \le w(x)$, we have $-w(x) \le h(x) \le w(x)$. Therefore,
\begin{align*}
\alpha((g(x) \vee -w(x)) \wedge w(x)) &= (\alpha g(x) \vee -w(x)) \wedge w(x) \\
&= (h(x) \vee -w(x)) \wedge w(x) \\
&= h(x) \\
&= \alpha g(x),
\end{align*}
and hence $\alpha(g(x) - ((g(x) \vee -w(x)) \wedge w(x))) = 0$. Thus, $I \subseteq \ker(\alpha)$, so there is a well-defined $\ell$-algebra homomorphism $\overline{\alpha} : F(X,w) \to A$ satisfying $\overline{\alpha} \circ \pi = \alpha$. Consequently, $\overline{\alpha}\circ f = \overline{\alpha}\circ \pi \circ g = \alpha \circ g = h$.
\[
\begin{tikzcd}[column sep = 5pc]
G(X) \arrow[r, "\pi"] \arrow[dr, "\alpha"] & F(X,w) \arrow[d, "\overline{\alpha}"] \\
X \arrow[u, "g"] \arrow[r, "h"'] & A
\end{tikzcd}
\]

It is left to show uniqueness of $\overline{\alpha}$. Let $\gamma : F(X,w) \to A$ be a $\bal$-morphism satisfying $\gamma\circ f = h$. If $\alpha' = \gamma \circ \pi$, then $\alpha' : G(X) \to A$ is an $\lalg$-morphism and $\alpha' \circ g = \gamma \circ \pi \circ g = \gamma \circ f = h$. Since $G(X)$ is a free object in $\lalg$ and $\alpha' \circ g = h = \alpha \circ g$, uniqueness implies that $\alpha' = \alpha$. From this we get $\gamma \circ \pi = \alpha = \overline{\alpha} \circ \pi$. Because $\pi$ is onto, we conclude that $\gamma = \overline{\alpha}$.
\end{proof}

\begin{remark} \label{rem: norm of f(x)}
If $(X, w) \in \WSet$, then $\|f(x) \| = w(x)$. To see this, since $w : (X, w) \to (\mathbb{R}, \left| \cdot \right|)$ is a $\WSet$-morphism, by Theorem~\ref{thm: UMP holds}, there is a $\bal$-morphism $\alpha : F(X,w) \to \mathbb{R}$ with $\alpha \circ f = w$. Because $f$ is a weighted set morphism, by Lemma~\ref{lem: norm preserving} we have $w(x) = \left\|\alpha(f(x))\right\| \le \|f(x)\| \le w(x)$. Thus, $\|f(x)\| = w(x)$.
\end{remark}

We next show that the Yosida space $Y_{F(X,w)}$ of $F(X,w)$ is homeomorphic to a power of $[0,1]$, and that $F(X,w)$ embeds into the $\ell$-algebra of piecewise polynomial functions on $Y_{F(X,w)}$. For a set $Z$ we let $PP([0,1]^Z)$ be the $\ell$-algebra of piecewise polynomial functions on $[0,1]^Z$. If $Z$ is finite, then the definition of $PP([0,1]^Z)$ is standard (see, e.g., \cite[p.~651]{Del89}). If $Z$ is infinite, we define $PP([0,1]^Z)$ as the direct limit of $\{ PP([0,1]^Y) \mid Y \textrm{ a finite subset of }Z \}$. It is straightforward to see that $PP([0,1]^Z) \in \bal$.

For each $A \in \bal$ and $M \in Y_A$ it is well known that $A/M \cong \mathbb{R}$ (see, e.g., \cite[Cor.~2.7]{HJ61}). This allows us to identify the Yosida space $Y_A$ with the space $\hom_{\bal}(A,\mathbb{R})$ of $\bal$-morphisms from $A$ to $\mathbb{R}$, by sending $\alpha : A \to \mathbb{R}$ to $\ker(\alpha)$ and $M \in Y_A$ to the natural homomorphism $A \to \mathbb{R}$. The topology on $\hom_{\bal}(A,\mathbb{R})$ is the subspace topology of the product topology on $\mathbb{R}^A$.

\begin{theorem} \label{thm: Yosida}
Let $(X,w)\in\WSet$ and let $X' = \{ x \in X \mid w(x) > 0\}$.
\begin{enumerate}[$(1)$]
\item The Yosida space of $F(X,w)$ is homeomorphic to $[0,1]^{X'}$.
\item $F(X,w)$ embeds into $PP([0,1]^{X'})$.
\end{enumerate}
\end{theorem}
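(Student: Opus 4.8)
The plan is to prove both parts using the universal property of $F(X,w)$ together with the identification $Y_A\cong\hom_{\bal}(A,\mathbb{R})$ recalled just before the statement. For part $(1)$ I would first observe that the adjunction of Theorem~\ref{thm: UMP holds} yields a natural bijection between $\hom_{\bal}(F(X,w),\mathbb{R})$ and $\hom_{\WSet}((X,w),(\mathbb{R},|\cdot|))$, sending a $\bal$-morphism $\alpha$ to $\alpha\circ f$. A $\WSet$-morphism $h:(X,w)\to(\mathbb{R},|\cdot|)$ is precisely a function $h:X\to\mathbb{R}$ with $|h(x)|\le w(x)$ for all $x$, so $h(x)$ is forced to be $0$ when $w(x)=0$ and ranges over the full interval $[-w(x),w(x)]$ when $w(x)>0$. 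Set-theoretically this hom-set is therefore $\prod_{x\in X'}[-w(x),w(x)]$, the singleton factors for $x\notin X'$ contributing nothing, and this product is homeomorphic to $[0,1]^{X'}$ by rescaling each nondegenerate factor.

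The substantive point in $(1)$ is that the bijection is a homeomorphism. Both sides carry the topology of pointwise convergence: $\hom_{\bal}(F(X,w),\mathbb{R})$ as a subspace of $\mathbb{R}^{F(X,w)}$ and $\hom_{\WSet}((X,w),(\mathbb{R},|\cdot|))$ as a subspace of $\mathbb{R}^X$. One direction is immediate, since $\alpha\mapsto\alpha(f(x))$ is continuous for each $x$, whence $\alpha\mapsto\alpha\circ f$ is continuous. For the converse I would use that $F(X,w)$ is generated as an $\ell$-algebra by $\{f(x)\mid x\in X\}$: every $a\in F(X,w)$ has the form $t(f(x_1),\dots,f(x_n))$ for some $\ell$-algebra term $t$ and finitely many $x_i$, so $\alpha(a)=t^{\mathbb{R}}(\alpha(f(x_1)),\dots,\alpha(f(x_n)))$. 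Since the term function $t^{\mathbb{R}}$ is continuous (being built from the continuous operations $+,\cdot,\vee,\wedge$ and scalar multiplication on $\mathbb{R}$), each evaluation $\alpha\mapsto\alpha(a)$ is continuous for the coarser topology pulled back along $\alpha\mapsto\alpha\circ f$; hence the two topologies coincide. This term-continuity verification is the main obstacle, as the underlying bijection is purely formal.

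For part $(2)$ I would feed the homeomorphism of $(1)$ into the Yosida map. By the discussion preceding Proposition~\ref{prop: SW}, $\zeta_{F(X,w)}:F(X,w)\to C(Y_{F(X,w)})\cong C([0,1]^{X'})$ is a $\bal$-monomorphism. Under the coordinate identification of $(1)$, the generator $f(x)$ is sent to the function whose value at $(t_y)_{y\in X'}$ is the affine rescaling of $t_x$ into $[-w(x),w(x)]$ when $x\in X'$, and to the constant $0$ when $x\notin X'$; in either case $\zeta_{F(X,w)}(f(x))$ is a polynomial function depending on at most one coordinate, hence lies in $PP([0,1]^{X'})$. Since $PP([0,1]^{X'})$ is a sub-$\ell$-algebra of $C([0,1]^{X'})$ and $F(X,w)$ is generated by the $f(x)$, the image $\zeta_{F(X,w)}[F(X,w)]$ is contained in $PP([0,1]^{X'})$; the fact that each element depends on only finitely many generators is exactly what matches the direct-limit definition of $PP$ when $X'$ is infinite. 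Therefore $\zeta_{F(X,w)}$ corestricts to an injective $\bal$-morphism $F(X,w)\to PP([0,1]^{X'})$, giving the desired embedding.
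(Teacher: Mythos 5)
Your proposal is correct and follows essentially the same route as the paper: part $(1)$ via the adjunction-induced identification of $\hom_{\bal}(F(X,w),\mathbb{R})$ with $\hom_{\WSet}((X,w),(\mathbb{R},|\cdot|))=\prod_{x\in X}[-w(x),w(x)]$ and rescaling, and part $(2)$ by pushing the generators $f(x)$ through $\zeta_{F(X,w)}$ and the coordinate homeomorphism to land on affine functions of a single coordinate. The only difference is that you spell out, via continuity of term functions on the generators, why the bijection in $(1)$ is a homeomorphism --- a point the paper asserts without proof --- which is a welcome addition (alternatively one can note it is a continuous bijection from a compact space to a Hausdorff space).
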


\begin{proof}
(1). We identify $Y_{F(X,w)}$ with $\hom_{\bal}(F(X,w), \mathbb{R})$ as in the paragraph before the theorem. From the universal mapping property, we see that there is a homeomorphism between $\hom_{\bal}(F(X,w), \mathbb{R})$ and $\hom_{\WSet}((X, w), (\mathbb{R}, \left|\cdot\right|))$. 
If $g : X \to \mathbb{R}$ is a $\WSet$-morphism, then $|g(x)| \le w(x)$, so $-w(x) \le g(x) \le w(x)$. Therefore, $\hom_{\WSet}((X, w), (\mathbb{R}, \left|\cdot\right|)) = \Pi_{x \in X} [-w(x), w(x)]$. If $x \in X'$, then $[-w(x), w(x)]$ is homeomorphic to $[0,1]$, and if $x \notin X'$, then $[-w(x), w(x)] = \{0\}$. Thus, $\Pi_{x \in X} [-w(x), w(x)]$ is homeomorphic to $[0,1]^{X'}$, and hence $Y_{F(X,w)}$ is homeomorphic to $[0,1]^{X'}$.

(2). Let  $\varphi : Y_{F(X,w)} \to \Pi_{x \in X'} [-w(x), w(x)]$ be the homeomorphism from the proof of (1) and let $\tau_x : [0,1] \to [-w(x), w(x)]$ be the homeomorphism given by $\tau_x(a) = 2w(x) a - w(x)$. If $\tau$ is the product of the $\tau_x$, then $\tau : [0,1]^{X'} \to \Pi_{x \in X'} [-w(x), w(x)]$ is a homeomorphism, and so $\rho := \tau^{-1}\circ \varphi$ is a homeomorphism from $Y_{F(X,w)}$ to $[0,1]^{X'}$. Therefore, $C(\rho) : C(Y_{F(X,w)}) \to C([0,1]^{X'})$ is a $\bal$-isomorphism. Since $F(X,w)$ is generated by $f[X]$, it is sufficient to show that $C(\rho)(f(x)) \in PP([0,1]^{X'})$. Let $x \in X$. If $w(x) = 0$, then since $\|f(x)\| = w(x)$ (see Remark~\ref{rem: norm of f(x)}), $f(x) = 0$, so $C(\rho)(f(x)) = 0 \in PP([0,1]^{X'})$. Suppose that $w(x) > 0$. Then $C(\rho)(f(x)) = 2w(x)p_x - w(x) \in PP([0,1]^{X'})$, completing the proof.
\end{proof}

\begin{remark}
We compare our results with those in the vector lattice literature. Recall (see, e.g., \cite[p.~48]{LZ71}) that the definition of a vector lattice, or Riesz space, is the same as that of an $\ell$-algebra except that multiplication is not present in the signature, and so in vector lattices there is no analogue of the multiplicative identity.
\begin{enumerate}
\item Let $\VL$ be the category of vector lattices and vector lattice homomorphisms. Then $\VL$ is a variety, so free vector lattices exist by Birkhoff's theorem (see, e.g., \cite[Thm.~10.12]{BS81}). Therefore, the forgetful functor $U : \VL \to \Set$ has a left adjoint.
\item Let a \emph{pointed vector lattice} be a vector lattice with a prescribed element, and a pointed vector lattice homomorphism a vector lattice homomorphism preserving the prescribed element. The associated category $\pVL$ is a variety, so the forgetful functor $U : \pVL \to \Set$ has a left adjoint.
\item If we consider the full subcategory $\uVL$ of $\pVL$ consisting of pointed vector lattices whose prescribed element is a strong order-unit, then Birkhoff's theorem does not apply since $\uVL$ is not a variety. In fact, an argument similar to the proof of Theorem~\ref{lem: no frees} shows that the forgetful functor $U : \uVL \to \Set$ does not have a left adjoint. However, a small modification of the proof of Theorem~\ref{thm: UMP holds} yields that the forgetful functor $U : \uVL \to \WSet$ does have a left adjoint.
\item Baker \cite[Thm.~2.4]{Bak68} showed that the free vector lattice $F(X)$ on a set $X$ embeds in the vector lattice $PL(\mathbb{R}^X)$ of piecewise linear functions on $\mathbb{R}^X$. In fact, Baker shows that $F(X)$ is isomorphic to the vector sublattice of $PL(\mathbb{R}^X)$ generated by the projection functions. Theorem~\ref{thm: Yosida}(2) is an analogue of Baker's result since the proof shows that $F(X,w)$ is isomorphic to the subalgebra of $PP([0,1]^{X'})$ generated by the projection functions. 
Beynon \cite[Thm.~1]{Bey74} showed that if $X$ is finite, then $F(X) = PL(\mathbb{R}^X)$. The analogue of Beynon's result for $\ell$-algebras is related to the famous Pierce-Birkhoff conjecture \cite[p.~68]{BK56} (see also \cite{Mah84, Mad89}).
\end{enumerate}
\end{remark}

\section{Some Consequences}

The proof of Theorem~\ref{lem: no frees} also yields that the forgetful functor $\ubal\to\Set$ does not have a left adjoint. On the other hand, since the forgetful functor $\bal\to\WSet$
has a left adjoint, if $\mathcal{C}$ is a reflective subcategory of $\bal$, then the forgetful functor $\mathcal{C} \to \WSet$ also has a left adjoint because the composition of adjoints is an adjoint \cite[Prop.~18.5]{AHS06}. Consequently, since $\ubal$ is a reflective subcategory of $\bal$, we obtain:

\begin{proposition} \label{prop: frees in ubal}
The forgetful functor $U : \ubal \to \WSet$ has a left adjoint.
\end{proposition}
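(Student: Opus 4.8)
The plan is to realize the desired left adjoint as a composite of two left adjoints that are already in hand, so that the proposition becomes a formal consequence of the composition principle for adjoints. The first observation is that the forgetful functor $U : \ubal \to \WSet$ factors as $U = U_0 \circ J$, where $J : \ubal \hookrightarrow \bal$ is the inclusion and $U_0 : \bal \to \WSet$ is the forgetful functor of Section~3. This factorization is immediate because $\ubal$ is a \emph{full} subcategory of $\bal$ and each uniformly complete $\ell$-algebra carries the same weighted-set structure $(A, \|\cdot\|)$ whether it is regarded as an object of $\ubal$ or of $\bal$; hence forgetting from $\ubal$ is the same as including into $\bal$ and then forgetting.

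First I would invoke Theorem~\ref{thm: UMP holds} to get a left adjoint $F : \WSet \to \bal$ of $U_0$. Next, by Proposition~\ref{prop: SW}, $\ubal$ is a reflective subcategory of $\bal$, which is precisely the statement that the inclusion $J$ admits a left adjoint, namely the uniform completion functor $R : \bal \to \ubal$ given by $R(A) = C(Y_A)$. Since the left adjoint of a composite is the composite of the left adjoints taken in the reverse order \cite[Prop.~18.5]{AHS06}, the functor $R \circ F : \WSet \to \ubal$ is left adjoint to $U = U_0 \circ J$, which is exactly what the proposition asserts. Explicitly, $R \circ F$ sends a weighted set $(X,w)$ to the uniform completion $C(Y_{F(X,w)})$ of the free object $F(X,w)$; combining this with Theorem~\ref{thm: Yosida} identifies the resulting object with $C([0,1]^{X'})$, where $X' = \{ x \in X \mid w(x) > 0 \}$.

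There is no genuine obstacle in this argument: once the two adjunctions are available, the conclusion is purely formal. The only point that requires care is the bookkeeping of the composition, that is, confirming that $U$ really does factor through the inclusion $J$ (which uses fullness of $\ubal$) and that the adjoints compose as $R \circ F$ rather than the ill-typed $F \circ R$. Accordingly, I would keep the two forgetful functors notationally distinct (writing $U_0$ for the $\bal$-level functor and reserving $U$ for the $\ubal$-level functor) to make the factorization $U = U_0 \circ J$ and the order of composition transparent.
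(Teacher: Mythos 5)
Your argument is correct and is essentially the paper's own proof: the paper likewise obtains the left adjoint by composing $F : \WSet \to \bal$ from Theorem~\ref{thm: UMP holds} with the uniform completion reflector of Proposition~\ref{prop: SW}, citing the same composition-of-adjoints principle \cite[Prop.~18.5]{AHS06}, and likewise identifies the resulting object as the uniform completion of $F(X,w)$. Your explicit bookkeeping of the factorization $U = U_0 \circ J$ just makes that formal step more transparent.
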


Since taking uniform completion is the reflector $\bal\to\ubal$, the left adjoint of Proposition~\ref{prop: frees in ubal}
is obtained as the uniform completion of $F(X,w)$ for each $(X,w)\in\WSet$.

We next turn to describing a left adjoint to the forgetful functor $\balg \to \WSet$. We recall that an $\ell$-algebra $A$ is \emph{Dedekind complete} if each subset of $A$ that is bounded above has a least upper bound (and hence each subset bounded below has a greatest lower bound) in $A$. We also recall that if $A$ is a commutative ring with 1, then the set $\Id(A)$ of idempotents of $A$ is a boolean algebra under the operations
\[
e \vee f = e + f - ef, \quad e \wedge f = ef, \quad \lnot e = 1-e.
\]

\begin{definition} \cite[Def.~3.6]{BMO20b}
We call $A \in \bal$ a \emph{basic algebra} if $A$ is Dedekind complete and the boolean algebra $\Id(A)$ is atomic.
\end{definition}

Let $A,B$ be basic algebras. Following \cite[Def.~18.12]{LZ71}, we call a $\bal$-morphism $\alpha:A\to B$ a \emph{normal homomorphism} if it preserves all existing joins and meets. Let $\balg$ be the category of basic algebras and normal homomorphisms. Then $\balg$ is a non-full subcategory of $\bal$. The category $\balg$ was introduced in \cite{BMO20b} where it was shown that $\balg$ is dually equivalent to $\Set$, hence providing a ring-theoretic version of Tarski duality. Thus, $\balg$ plays a similar role in $\bal$ to that of $\caba$ in $\sf BA$.

The functors $B:\Set\to\balg$ and $X:\balg\to\Set$ establishing the dual equivalence between $\Set$ and $\balg$ are defined as follows.
For a set $X$ let $B(X)$ be the $\ell$-algebra of all bounded real-valued functions, and for a map $\varphi : X\to Y$ let $B(\varphi) : B(Y)\to B(X)$ be given by $B(\varphi)(f)=f\circ\varphi$ for $f\in B(Y)$. Then $B:\Set\to\balg$ is a well-defined contravariant functor.

For $A\in\balg$ let $X_A$ be the set of atoms of $\Id(A)$. We then set $X(A)=X_A$, and for a $\balg$-morphism $\alpha:A \to B$ we let $X(\alpha):X_B \to X_A$ be given by
\[
X(\alpha)(x) = \bigwedge \{ a\in \Id(A) \mid x \le \alpha(a) \}
\]
for $x\in X_A$.
Then $X : \balg\to\Set$ is a well-defined contravariant functor, and the functors $B$ and $X$ yield a dual equivalence of $\balg$ and $\Set$. The natural isomorphisms $\eta : 1_{\Set} \to X \circ B$ and $\vartheta : 1_{\balg} \to B \circ X$ are defined by letting $\eta_X(x)$ be the characteristic function of $\{x\}$ for each $x\in X$, and
\[
\vartheta_A(a)(x)=\zeta_A(a)((1-x)A) \mbox{ for each } a\in A \mbox{ and } x\in X_A,
\]
where $(1-x)A$ is the $\ell$-ideal of $A$ generated by $1-x$ (it is maximal since $x$ is an atom of $\Id(A)$).

As was shown in \cite{BMO18c}, for $A\in\bal$, the $\ell$-algebra $B(Y_A)$ together with $\zeta_A:A\to B(Y_A)$ is the (unique up to isomorphism) canonical extension of $A$, where we recall that a {\em canonical extension} of $A$ is $A^\sigma\in\balg$ together with a $\bal$-monomorphism $e:A\to A^\sigma$ satisfying:
\begin{enumerate}
\item (Density) Each $x\in A^\sigma$ is a join of meets of elements of $e[A]$.
\item (Compactness) For $S,T\subseteq A$ and $0<\varepsilon\in\mathbb R$, from $\bigwedge e[S] + \varepsilon \le \bigvee e[T]$ it follows that $\bigwedge e[S'] \le \bigvee e[T']$ for some finite $S'\subseteq S$ and $T'\subseteq T$.
\end{enumerate}

\begin{theorem} \label{prop: sigma is a reflector}
$(\cdot)^\sigma:\bal\to\balg$ is a reflector, so $\balg$ is a (non-full) reflective subcategory of $\bal$.
\end{theorem}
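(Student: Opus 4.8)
The plan is to exhibit, for each $A\in\bal$, the canonical extension map $e_A=\zeta_A\colon A\to A^\sigma$ as a universal arrow from $A$ to the inclusion functor $\iota\colon\balg\hookrightarrow\bal$. Concretely, I will show that for every $B\in\balg$ and every $\bal$-morphism $h\colon A\to B$ there is a unique normal homomorphism $\bar h\colon A^\sigma\to B$ with $\bar h\circ e_A=h$. Granting this, the action of $(\cdot)^\sigma$ on a morphism $f\colon A\to A'$ is forced to be $f^\sigma=\overline{e_{A'}\circ f}$, which is normal by construction, and the adjunction $(\cdot)^\sigma\dashv\iota$ together with the fact that the $e_A$ are the reflection arrows then follows from the general correspondence between universal arrows and adjunctions (see, e.g., \cite{AHS06}). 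Thus everything reduces to the existence and uniqueness of $\bar h$.

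For existence I use density and compactness. Call $p\in A^\sigma$ \emph{closed} if $p=\bigwedge e_A[S]$ for some $S\subseteq A$; by density every element of $A^\sigma$ is a join of closed elements. Since $B$ is Dedekind complete and all its elements are bounded, $\bigwedge h[S]$ exists in $B$, so I may define $\bar h(p)=\bigwedge h[S]$ on closed elements and $\bar h(x)=\bigvee\{\bar h(p)\mid p\le x,\ p\text{ closed}\}$ in general. The first real point is well-definedness and monotonicity on closed elements, and this is where the $\varepsilon$-form of compactness is essential: if $\bigwedge e_A[S]\le e_A(b)$, then for each $\varepsilon>0$ we have $\bigwedge e_A[S]+\varepsilon\le e_A(b+\varepsilon)$, so compactness yields a finite $S_0\subseteq S$ with $e_A(\bigwedge S_0)=\bigwedge e_A[S_0]\le e_A(b+\varepsilon)$; as $e_A$ is an injective $\ell$-homomorphism, hence an order-embedding, this gives $\bigwedge S_0\le b+\varepsilon$ in $A$, whence $\bigwedge h[S]\le\bigwedge h[S_0]=h(\bigwedge S_0)\le h(b)+\varepsilon$. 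Letting $\varepsilon\to 0$ and using that $B$ is archimedean gives $\bigwedge h[S]\le h(b)$. Applying this with meets on both sides shows that $\bigwedge e_A[S]\le\bigwedge e_A[S']$ implies $\bigwedge h[S]\le\bigwedge h[S']$, which delivers both well-definedness and monotonicity of $\bar h$ on closed elements; in particular the two clauses of the definition agree on closed elements and $\bar h(e_A(a))=h(a)$, so $\bar h\circ e_A=h$.

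It then remains to verify that $\bar h$ is a normal homomorphism. Preservation of the lattice and ring operations is proved by reducing each identity to closed elements via density and pushing the finite operations through the defining joins and meets; the interaction of a finite operation with the infinite meet defining a closed element is again controlled by the $\varepsilon$-compactness property, exactly as in the well-definedness step. Normality, i.e.\ preservation of all existing joins and meets, follows from the definition of $\bar h$ together with density. Finally, uniqueness is immediate: if $\gamma\colon A^\sigma\to B$ is normal with $\gamma\circ e_A=h$, then for a closed $p=\bigwedge e_A[S]$ normality forces $\gamma(p)=\bigwedge\gamma[e_A[S]]=\bigwedge h[S]=\bar h(p)$, and for arbitrary $x$ it forces $\gamma(x)=\bigvee\{\gamma(p)\mid p\le x\ \text{closed}\}=\bar h(x)$, so $\gamma=\bar h$.

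I expect the main obstacle to be the verification that $\bar h$ preserves the full $\ell$-algebra structure: while each operation is handled by the same density-plus-compactness pattern used for well-definedness, multiplication is the delicate case, since it is not monotone in general and one must treat the positive and negative parts separately and control them using boundedness and the archimedean property. The lattice operations and addition are comparatively routine, and normality and uniqueness are formal consequences of density once $\bar h$ is known to be a $\bal$-morphism.
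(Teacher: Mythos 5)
Your overall strategy---exhibit $e_A\colon A\to A^\sigma$ as a universal arrow to the inclusion $\balg\hookrightarrow\bal$---is the same as the paper's, but your construction of the extension $\bar h$ is genuinely different. The paper does not work abstractly from the density and compactness axioms at all: it uses the concrete realizations $A^\sigma=B(Y_A)$ and $C\cong B(X_C)$ (via $\vartheta_C$), restricts the dual map $Y(\alpha)\colon Y_C\to Y_A$ to the atoms $X_C$ to get a function $f\colon X_C\to Y_A$, and takes $\gamma=\vartheta_C^{-1}\circ B(f)$. Since $B(f)$ is just precomposition with $f$, the facts that $\gamma$ is an $\ell$-algebra homomorphism and is normal are automatic, and the only things left to check are the commutativity $\gamma\circ e=\alpha$ (a pointwise computation with maximal $\ell$-ideals) and uniqueness via density. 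Your uniqueness argument is essentially the paper's and is fine.

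The gap in your version is the step you defer: the verification that $\bar h$ is a normal $\ell$-algebra homomorphism. Your well-definedness/monotonicity argument on closed elements is correct, and addition and the lattice operations can probably be pushed through (translation and finite meets distribute over arbitrary meets in a Dedekind complete $\ell$-group, so closed elements are closed under $+$, $\vee$, $\wedge$ and $\bar h$ respects these on closed elements). But already for addition on arbitrary elements you need that every closed $r\le x+y$ satisfies $\bar h(r)\le\bar h(x)+\bar h(y)$, which does not follow from ``pushing the operation through the defining joins''; for multiplication, which is not order-preserving, the difficulty is worse and you acknowledge it without resolving it; and the claim that normality ``follows from the definition together with density'' is too quick---your definition of $\bar h$ guarantees preservation of the particular joins used to define it, but preservation of \emph{all} existing meets is a separate and nontrivial assertion. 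These verifications are exactly the technical content of the canonical-extension construction in \cite{BMO18c}, which the paper's proof is designed to avoid by working in the function algebras $B(Y_A)$ and $B(X_C)$. As written, your proof reduces the theorem to a body of unproved claims that constitute the hard part of the argument.
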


\begin{proof}
Let $A \in \bal$, $C \in \balg$ and $\alpha : A\to C$ be a $\bal$-morphism. By \cite[p.~89]{Mac71}, it suffices to show that there is a unique $\balg$-morphism $\gamma : A^\sigma \to C$ with $\gamma \circ e = \alpha$. Since $\alpha$ is a $\bal$-morphism, $Y(\alpha) : Y_C \to Y_A$ is a continuous map. Let $f : X_C \to Y_A$ be given by $f(x)=Y(\alpha)((1-x)C)$ for each $x\in X_C$. In other words, if we identify $X_C$ with a subset of $Y_C$ (by sending $x$ to $(1-x)C$), then $f$ is the restriction of $Y(\alpha)$ to $X_C$.
This induces a $\bal$-morphism $B(f)$ from $A^\sigma = B(Y_A)$ to $B(X_C)$. 
Since $\vartheta_C : C \to B(X_C)$ is an isomorphism, 
we have a $\balg$-morphism $\gamma := \vartheta_C^{-1} \circ B(f) : B(Y_A) \to C$.
\[
\begin{tikzcd}[column sep = 5pc]
A \arrow[r, "e"] \arrow[d, "\alpha"'] & B(Y_A) \arrow[d, "B(f)"] \arrow[dl,  "\gamma"] \\
C \arrow[r, "\vartheta_C"'] & B(X_C)
\end{tikzcd}
\]
We show that $\gamma \circ e = \alpha$. For this it suffices to show that $B(f) \circ e = \vartheta_C \circ \alpha$. Let $x \in X_C$ and $a \in C$. Then $B(f)(e(a)) = e(a) \circ f$ sends $x$ to 
$\zeta_A(a)(\alpha^{-1}((1-x)C))$, which is equal to the unique $r \in \mathbb{R}$ satisfying $a + \alpha^{-1}((1-x)C) = r + \alpha^{-1}((1-x)C)$.
On the other hand, 
\[
(\vartheta_C \circ \alpha)(a)(x) = \vartheta_C(\alpha(a))(x) = \zeta_C(\alpha(a))((1-x)C),
\]
which is the unique $s \in \mathbb{R}$ satisfying $\alpha(a) + (1-x)C = s + (1-x)C$. Since $a - r \in \alpha^{-1}((1-x)C)$, we have $\alpha(a - r) \in (1-x)C$. Therefore, $\alpha(a) - r \in (1-x)C$, so $\alpha(a) + (1-x)C = r + (1-x)C$. Thus, $r = s$, and hence $B(f) \circ e(a)$ and $(\vartheta_C \circ \alpha)(a)$ agree for each $x \in X_C$. Since $a \in C$ was arbitrary, we conclude that $B(f) \circ e = \vartheta_C \circ \alpha$.

For uniqueness, suppose that $\gamma' : A^\sigma \to C$ satisfies $\gamma' \circ e = \alpha$. Then $\gamma'|_{e[A]} = \gamma|_{e[A]}$. Since $\gamma$ and $\gamma'$ are $\balg$-morphisms and $e[A]$ is dense in $A^\sigma$, we conclude that $\gamma' = \gamma$.
\end{proof}

The following is now an immediate consequence of Theorems~\ref{thm: UMP holds} and~\ref{prop: sigma is a reflector}.

\begin{proposition} \label{prop: frees in balg}
The forgetful functor $U : \balg \to \WSet$ has a left adjoint.
\end{proposition}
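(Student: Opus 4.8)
The plan is to obtain the left adjoint by composing the two adjunctions supplied by Theorems~\ref{thm: UMP holds} and~\ref{prop: sigma is a reflector}, invoking the fact that a composition of left adjoints is left adjoint to the corresponding composition of right adjoints \cite[Prop.~18.5]{AHS06}. By Theorem~\ref{thm: UMP holds} the forgetful functor $U:\bal\to\WSet$ has a left adjoint $F:\WSet\to\bal$, and by Theorem~\ref{prop: sigma is a reflector} the reflector $(\cdot)^\sigma:\bal\to\balg$ is left adjoint to the inclusion functor $J:\balg\to\bal$.

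The one step requiring attention is to identify the forgetful functor $\balg\to\WSet$ with the composite $U\circ J$. This is immediate: it sends a basic algebra $A$ to the weighted set $(A,\|\!\cdot\!\|)$ and a normal homomorphism $\alpha$ to its underlying $\WSet$-morphism, which is exactly $U$ applied to the $\bal$-morphism $J(\alpha)$. With this identification, the adjunction $(\cdot)^\sigma\circ F\dashv U\circ J$ follows formally: for $(X,w)\in\WSet$ and $C\in\balg$ there are natural bijections
\[
\hom_{\balg}\bigl(F(X,w)^\sigma, C\bigr)\;\cong\;\hom_{\bal}\bigl(F(X,w), J(C)\bigr)\;\cong\;\hom_{\WSet}\bigl((X,w), U(J(C))\bigr),
\]
where the first uses $(\cdot)^\sigma\dashv J$ and the second uses $F\dashv U$. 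Hence $(\cdot)^\sigma\circ F:\WSet\to\balg$ is the desired left adjoint.

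I do not expect a genuine obstacle, as the statement is formal once the two adjunctions are in hand; in particular, the non-fullness of $J$ is harmless, since the composition principle uses only the given adjunctions and not any feature of the underlying morphism classes. It is worth recording that the resulting left adjoint is explicitly the canonical extension $F(X,w)^\sigma$ of the free bounded archimedean $\ell$-algebra on $(X,w)$, in parallel with the uniform-completion description of the left adjoint following Proposition~\ref{prop: frees in ubal}.
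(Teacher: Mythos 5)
Your proposal is correct and is essentially the paper's own argument: the paper likewise obtains the left adjoint by composing the adjunction of Theorem~\ref{thm: UMP holds} with the reflection of Theorem~\ref{prop: sigma is a reflector}, citing the same composition-of-adjoints principle, and likewise identifies the result as the canonical extension of $F(X,w)$.
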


This left adjoint is obtained as the canonical extension of $F(X,w)$ for each $(X,w)\in\WSet$. On the other hand, the proof of Theorem~\ref{lem: no frees} shows that the forgetful functor $\balg\to\Set$ does not have a left adjoint.

\def\cprime{$'$}
\providecommand{\bysame}{\leavevmode\hbox to3em{\hrulefill}\thinspace}
\providecommand{\MR}{\relax\ifhmode\unskip\space\fi MR }
% \MRhref is called by the amsart/book/proc definition of \MR.
\providecommand{\MRhref}[2]{%
  \href{http://www.ams.org/mathscinet-getitem?mr=#1}{#2}
}
\providecommand{\href}[2]{#2}

\end{document}